\newtheorem{theo}{Theorem}[section]
\newtheorem{prop}{Proposition}[section]
\newtheorem{corr}{Corollary}[section]
\newtheorem{lemm}[prop]{Lemma}
\theoremstyle{definition}
\numberwithin{equation}{section}
\newcommand{\R}{\mathbb{R}}
\newcommand{\N}{\mathbb{N}}
\newcommand{\C}{\mathbb{C}}
\newcommand{\e}{\mathrm{e}}
\newcommand{\Pbf}{\mathbf{P}}
\let\Re=\Real
\def\Res{{\rm Res}\:}
\DeclareMathOperator{\id}{Id}
\let\Im=\Imag
\def\C{\mathbb {C}}
\def\ep{\epsilon}
\title[Dynamical zeta function]{Dirichlet dynamical zeta function for billiard flow}
\author[V.Petkov]{Vesselin Petkov}
\address{Université de Bordeaux, Institut de Mathématiques de Bordeaux, 351, Cours de la Libération, 33405 Talence, France}
\email{petkov@math.u-bordeaux.fr}
\begin{document}

\maketitle
\begin{abstract} We study the Dirichlet dynamical zeta function $\eta_D(s)$ for billiard flow corresponding to several strictly convex disjoint obstacles. For large $\Re s$ we have $\eta_D(s) =\sum_{n= 1}^{\infty} a_n e^{-\lambda_n s}, \: a_n \in \R$ and $\eta_D$ admits a meromorphic continuation to $\C$. We obtain some conditions of the frequencies $\lambda_n$ and some sums of coefficients $a_n$ which imply that $\eta_D$ cannot be prolonged as entire function.

\end{abstract}
\section{Introduction}
Let $D_1, \dots, D_r \subset \R^d,\: {r \geqslant 3},\: d \geqslant 2,$ be compact strictly convex disjoint obstacles with $C^{\infty}$ smooth boundary and let $D = \bigcup_{j= 1}^r D_j.$  We assume that every $D_j$ has non-empty interior and throughout this paper we suppose the following non-eclipse condition
\begin{equation}\label{eq:1.1}
D_k \cap {\rm convex}\: {\rm hull} \: ( D_i \cup D_j) = \emptyset, 
\end{equation} 
for any $1 \leqslant i, j, k \leqslant r$ such that $i \neq k$ and $j \neq k$.
Under this condition all periodic trajectories for the billiard flow in $\Omega  = \R^d \setminus \mathring{D}$ are ordinary reflecting ones without tangential intersections to the boundary of $D$.  We consider the (non-grazing) billiard flow {$\varphi_t$} (see \cite[Section 2]{Petkov2024} for the definition).  Next the periodic trajectories will be called periodic rays. For any periodic ray $\gamma$, denote by  $\tau(\gamma) > 0$ its period, by $\tau^\sharp(\gamma) > 0$ its primitive period, and by $m(\gamma)$ the number of reflections of $\gamma$ at the obstacles. Denote by  $P_\gamma$ the associated linearized Poincar\'e map (see \cite[Section 2.3]{petkov2017geometry} for the definition).

 Let $\mathcal{P}$ be the set of all oriented periodic rays. The counting function of the lengths of primitive periodic rays $\Pi$ satisfies 
\begin{equation} \label{eq:1.2}
\sharp\{\gamma \in \Pi:\: \tau^\sharp(\gamma) \leqslant x\} \sim \frac{\e^{h x}}{h x} , \quad x \to + \infty,
\end{equation}
for some $h > 0$ (see for instance, \cite[Theorem 6.5] {Parry1990} for weak-mixing suspension symbolic flows). Thus there exists an infinite number of primitive periodic trajectories and for every small $\ep > 0$ we have  the estimate
\begin{equation}\label{eq:1.3}
e^{(h- \ep) x} \leq  \sharp\{\gamma \in \mathcal P:\: \tau(\gamma) \leqslant x\} \leqslant \e^{ (h + \ep) x}, \: x > C_{\ep}.
\end{equation}
Moreover, for some positive constants $C_1, d_1, d_2$ we have (see for instance \cite[Appendix] {petkov1999zeta})
\begin{equation} \label{eq:1.4}
C_1 \e^{d_1 \tau(\gamma)} \leqslant |\det(\mathrm{Id} - P_{\gamma})| \leqslant \e^{d_2 \tau(\gamma)}, \quad \gamma \in \mathcal{P}.
\end{equation}
By using these estimates, we define for $\Re(s) \gg 1$ the Dirichlet dynamical zeta function $\eta_D(s)$ by 
\[
 \eta_\mathrm{D}(s) = \sum_{\gamma \in \mathcal{P}} (-1)^{m(\gamma)} \frac{ \tau^\sharp(\gamma) \e^{-s\tau(\gamma)}}{|\det(\mathrm{Id}-P_\gamma)|^{1/2} },
\]
where the sums run over all oriented periodic rays. 
 This zeta function is important for the analysis of the distribution of the scattering resonances related to the Laplacian in $\R^d \setminus \bar{D}$ with Dirichlet  boundary conditions on $\partial D$ (see \cite[\S1]{chaubet2022} for more details). Denote by $\sigma_a \in \R, \sigma _c \in \R$  the abscissa of absolute convergence and the abscissa of convergence of $\eta_D$, respectively.

It was proved in \cite[Theorem 1 and Theorem 4]{chaubet2022} that $\eta_D$  admits a meromorphic continuation to $\C$ with simple poles and integer residues. 
 On the other hand,  for $d = 2$  \cite{stoyanov2001spectrum} and for $d\geqslant 3$ under some conditions  \cite{stoyanov2012non} Stoyanov proved that there exists $\varepsilon > 0$ such that $\eta_{\mathrm D}(s)$ is analytic for $\Re s \geqslant \sigma_a - \varepsilon$.

    There is a conjecture that $\eta_D$ cannot be prolonged as {\it entire function}.  This conjecture was established for obstacles with real analytic boundary (see \cite[Theorem 3]{chaubet2022}) and for obstacles with sufficiently small diameters \cite{ikawa1990zeta}, \cite{stoyanov2009poles} and $C^{\infty}$ smooth boundary. If $\eta_D(s)$ is not an entire function, then we obtain  two important corollaries:
    
 (i)    $\eta_D$  has infinite number of poles in some strip $\{ z \in \C: \Re z \geq \beta\}$ (see \cite[Section 3]{Petkov2024} for a lower bound of the counting function of poles),
 
 (ii)  The modified Lax-Phillips conjecture (MLPC) for scattering resonances introduced by Ikawa \cite{ikawa1990poles} holds. (MLPC) says that there exists a strip $\{z \in \C: 0 < \Im z \leq \alpha\}$ containing an infinite number of scattering resonances for Dirichlet Laplacian in $\R^d \setminus \bar{D}$ (see \cite[Section 1]{chaubet2022} for definitions and more precise results).
  
  Let $\rho \in C_0^{\infty}(\R; \R_+)$ be an even function with ${\rm supp}\: \rho \subset \left[-1, 1\right]$ such that
$\rho(t) > 1 \quad \text{if} \quad |t| \leqslant 1/2.$ Let $(\ell_j)_{j \in \N}$ and $(m_j)_{j \in \N}$ be sequences of positive numbers such that $\ell_j \geqslant d_0 = \min_{k \neq m} {\rm dist}\: (D_k, D_m)> 0, \: m_j \geqslant \max\{1, \frac{1}{d_0}\}$ and let $\ell_j \to \infty,\: m_j\to \infty$ as $j \to \infty$. Set
$\rho_j(t) = \rho(m_j (t- \ell_j)), \quad t \in \R,$
and introduce the distribution $\mathcal F_\mathrm D(t) \in  {\mathcal S}'(\R^+)$ by
\begin{equation*} 
\mathcal F_\mathrm D(t) = \sum_{\gamma \in \mathcal P} \frac{(-1)^{m(\gamma)} \tau^{\sharp}(\gamma)  \delta(t - \tau(\gamma))}{|\det(I - P_{\gamma})|^{1/2}}.
\end{equation*}
We have the following
\begin{prop} The function $ \eta_\mathrm D(s)$ cannot be prolonged as an entire function of $s$ if and only if  there exists $\alpha_0 > 0$ such that for any $\beta > \alpha_0$ we can find sequences  $(\ell_j), (m_j)$ with $\ell_j \nearrow \infty$ as $j \to \infty$  such that for all $j \geqslant 0$ one has $ e^{\beta \ell_j} \leqslant m_j \leqslant \e^{2 \beta \ell_j}$ and
\begin{equation} \label{eq:1.5}
|\langle \mathcal F_\mathrm D, \rho_j \rangle | \geqslant \e^{- \alpha_0 \ell_j}.
\end{equation}
\end{prop}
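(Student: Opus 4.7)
The argument rests on the Mellin/Laplace inversion identity linking $\langle \mathcal F_\mathrm D,\rho_j\rangle$ to contour integrals of $\eta_\mathrm D$. For $c>\sigma_a$, Fourier inversion applied to the smooth compactly supported $\rho_j$ together with absolute convergence of the Dirichlet series gives
\[
\langle \mathcal F_\mathrm D,\rho_j\rangle \;=\; \frac{1}{2\pi i}\int_{c-i\infty}^{c+i\infty}\widetilde{\rho}_j(-s)\,\eta_\mathrm D(s)\,ds,
\]
where $\widetilde{\rho}_j(-s) := \int \rho_j(t)\e^{st}\,dt = m_j^{-1}\e^{s\ell_j}H(s/m_j)$ and $H(w):=\int \rho(u)\e^{wu}\,du$ is entire of exponential type $1$ satisfying the Paley-Wiener decay $|H(\sigma+i\tau)| \le C_N\e^{|\sigma|}(1+|\tau|)^{-N}$. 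The factor $\e^{\Re s\,\ell_j}/m_j$ in $\widetilde{\rho}_j(-s)$ is precisely what converts contour shifts into quantitative bounds on the pairing.

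For the sufficiency direction $(\Leftarrow)$, I would argue by contradiction: assume (\ref{eq:1.5}) holds but $\eta_\mathrm D$ is entire. Using the polynomial growth of $\eta_\mathrm D$ in vertical strips (as provided by the meromorphic continuation of \cite{chaubet2022}), the contour may be shifted to $\Re s = -\beta$ without encountering any residues. Combining the Paley-Wiener decay of $H$ with a bound $|\eta_\mathrm D(-\beta+i\xi)| \le C_\beta(1+|\xi|)^{N_\beta}$ then yields $|\langle\mathcal F_\mathrm D,\rho_j\rangle| \le C_\beta\e^{-\beta\ell_j}m_j^{N_\beta}$. Combined with $m_j\le \e^{2\beta\ell_j}$ and $\beta$ taken sufficiently large, this upper bound is eventually smaller than $\e^{-\alpha_0\ell_j}$, contradicting (\ref{eq:1.5}).

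For the necessity direction $(\Rightarrow)$, assume $\eta_\mathrm D$ has a pole $s_0 = -\beta_0+i\tau_0$. Shifting the contour from $\Re s = c$ past $s_0$ down to $\Re s = -\beta$, the residue at $s_0$ contributes
\[
\widetilde{\rho}_j(-s_0)\,\mathrm{Res}_{s_0}\eta_\mathrm D \;=\; m_j^{-1}\e^{s_0\ell_j}H(s_0/m_j)\,\mathrm{Res}_{s_0}\eta_\mathrm D,
\]
while the remainder integral obeys the same bound as in the sufficiency step. The plan is to choose $\alpha_0$ somewhat larger than $\beta_0$, and then for each $\beta>\alpha_0$ construct $\ell_j\nearrow\infty$ and $m_j\in[\e^{\beta\ell_j},\e^{2\beta\ell_j}]$ such that (i) the phase $\e^{i\tau_0\ell_j}$ stays in a fixed arc bounded away from $0$ (trivial if $\tau_0=0$, by a density argument otherwise), (ii) $H(s_0/m_j)$ stays close to $H(0)=\int\rho > 1$, and (iii) the remainder integral is dominated by the residue. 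Putting these together yields the lower bound (\ref{eq:1.5}) once the factor $1/m_j$ is absorbed into the exponent.

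\textbf{Main obstacle.} The delicate step is the quantitative balance in the necessity direction: the residue contribution $\sim m_j^{-1}\e^{-\beta_0\ell_j}$ must dominate the contour remainder $\sim\e^{-\beta\ell_j}m_j^{N_\beta}$ subject to the constraint $m_j\ge\e^{\beta\ell_j}$, and this must work uniformly as $\beta\to\infty$. This forces $\alpha_0$ to be chosen strictly larger than $\beta_0$, with the precise gap depending on the growth exponents $N_\beta$ on vertical lines from \cite{chaubet2022}. If the residue of a single pole does not suffice, one would sum contributions from a family of poles with the same real part (using a simultaneous density argument to prevent destructive phase interference), or alternatively translate the statement into a Tauberian estimate on partial sums $\sum_{\tau(\gamma)\in I_j}(-1)^{m(\gamma)}\tau^\sharp(\gamma)/|\det(I-P_\gamma)|^{1/2}$ over carefully chosen short intervals $I_j$.
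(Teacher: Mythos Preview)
The paper does not give a self-contained proof of this proposition: the paragraph immediately following the statement attributes the necessity direction ($\Rightarrow$) to Ikawa \cite[Prop.~2.3]{ikawa1990poles} and the sufficiency direction ($\Leftarrow$) to the proof of Theorem~1.1 in \cite{Petkov2024}. Your contour-integral/Laplace-inversion framework is indeed the mechanism underlying both references, so the plan is sound at the conceptual level.

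That said, there is a genuine quantitative gap in your sufficiency sketch. From $|\langle\mathcal F_\mathrm D,\rho_j\rangle|\le C_\beta\,\e^{-\beta\ell_j}m_j^{N_\beta}$ together with $m_j\le \e^{2\beta\ell_j}$ you obtain at best $C_\beta\,\e^{(2N_\beta-1)\beta\ell_j}$, which beats $\e^{-\alpha_0\ell_j}$ only when $N_\beta<\tfrac12$; taking $\beta$ large makes the bound \emph{worse} once $N_\beta\ge\tfrac12$, so your sentence ``$\beta$ taken sufficiently large'' does not close the argument. A generic polynomial-growth statement from \cite{chaubet2022} is not enough here. What \cite{Petkov2024} actually uses---and what is redeveloped in Section~2 of the present paper---is the local trace formula (\ref{eq:2.1}): under the assumption that $\eta_\mathrm D$ is entire, all pole terms cancel and one obtains (\ref{eq:2.3}) with a \emph{fixed} exponent $2d^2+2d-1$ valid on every vertical line, after which Phragm\'en--Lindel\"of yields (\ref{eq:2.4}) with arbitrarily small exponent $\nu$; then your contour shift gives the contradiction. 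For the necessity direction you have correctly located the obstacle: with $m_j\ge\e^{\beta\ell_j}$ and $\beta>\alpha_0$, the single-pole residue is of size at most $m_j^{-1}\e^{-\beta_0\ell_j}\le\e^{-(\beta+\beta_0)\ell_j}$, which can \emph{never} exceed $\e^{-\alpha_0\ell_j}$ for any fixed $\alpha_0$, so the naive residue computation fails outright and your fallback suggestions remain programmatic. Ikawa's argument in \cite{ikawa1990poles} does resolve this, but your proposal does not yet contain that resolution.
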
   
  More precisely, if $\eta_D$ cannot be prolonged as entire function, the existence of  sequences $(\ell_j),\:(m_j)$ with the above properties has been proved by Ikawa \cite[Prop.2.3]{ikawa1990poles}, while in the proof of Theorem 1.1 in \cite{Petkov2024}  it was established that if such sequences exist, the function $\eta_D$ has an infinite number of poles. 
  
        The conditions of Proposition 1.1 are difficult to verify. The purpose of this Note is to find other conditions which imply that $\eta_D$ cannot be prolonged as  entire function. For this purpose we exploit  the local trace  formula (see \cite[Theorem 2.1]{Petkov2024}) and the summability by typical means of Dirichlet series introduced by Hardy and Riesz \cite{hardy1964} (see also \cite[Section 2]{Defant2022}).     
  It is convenient to  write $\eta_D(s)$ as a Dirichlet series
  \begin{equation} \label{eq:1.6}
   \eta_D(s) = \sum_{n=1}^\infty a_n e^{-\lambda_n s},\: \Re s \gg 1,
   \end{equation}
   where the frequencies are arranged as follows
   \begin{equation*}
   0 < \lambda_1 < \lambda_2 <...< \lambda_n <...
   \end{equation*}
   and
    \begin{equation} \label{eq:1.7}
  a_n = \sum_{\gamma \in \mathcal P, \tau(\gamma)  = \lambda_n}\frac{(-1)^{m(\gamma)} \tau^\sharp(\gamma) }{|\det(\mathrm{Id}-P_\gamma)|^{1/2}} .
\end{equation}
 It is well known  that 
 $$\sigma_c \geq \sigma_a - \limsup_{n \to \infty} \frac{\log n}{\lambda_n}= \sigma_a - h$$
  (see for instance, \cite[Theorem 9]{hardy1964}).  Since $\sigma_a > - \infty$,  one deduces $\sigma_c > - \infty.$

  Our main result is the following
      \begin{theo} Suppose $\sigma_c < 0$. Assume that there exist constants $C > 0, \:\delta > h + 1,\: -\gamma < \sigma_c$ and an increasing sequence $m_j \nearrow \infty$ such that
      \begin{equation} \label{eq:1.8}
      \lambda_{m_j} - \lambda_{m_j- 1} \geq Ce^{-\delta \lambda_{m_j}},
      \end{equation}
      \begin{equation}\label{eq:1.9}
      |\sum_{n \geq m_j} a_n| \geq e^{-\gamma \lambda_{m_j}}. 
      \end{equation}
            Then $\eta_D(s)$ cannot be prolonged as entire function.
   \end{theo}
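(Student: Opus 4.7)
The plan is to argue by contradiction: assume $\eta_D(s)$ extends to an entire function, and derive an upper bound on $|\sum_{n \geq m_j} a_n|$ that contradicts \eqref{eq:1.9}. Since $\sigma_c < 0$, the series $\sum_n a_n$ converges to $\eta_D(0) = S$, so setting $S_N = \sum_{n=1}^N a_n$ one has $\sum_{n \geq m_j} a_n = S - S_{m_j - 1}$; it thus suffices to prove that $|S - S_{m_j - 1}| < e^{-\gamma \lambda_{m_j}}$ for all sufficiently large $j$. Using the gap \eqref{eq:1.8}, first fix a cutoff $\omega_j \in (\lambda_{m_j - 1}, \lambda_{m_j})$ at distance at least $\tfrac{C}{2}e^{-\delta \lambda_{m_j}}$ from both endpoints; then $S_{m_j - 1} = \sum_{\lambda_n < \omega_j} a_n$, and since $\omega_j = \lambda_{m_j} + O(e^{-\delta \lambda_{m_j}})$, one has $e^{-\gamma_1 \omega_j} = (1+o(1))\, e^{-\gamma_1 \lambda_{m_j}}$ for any fixed $\gamma_1 > 0$.

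The second ingredient is a truncated Perron formula (equivalently, a Hardy--Riesz typical mean of suitable order $\kappa$; see \cite{hardy1964} and \cite[Section~2]{Defant2022}):
\[
S_{m_j - 1} = \frac{1}{2\pi i} \int_{c - iT_j}^{c + iT_j} \eta_D(s)\, \frac{e^{\omega_j s}}{s}\, ds + \mathrm{err}_j, \qquad c > \sigma_a,
\]
where $\mathrm{err}_j$ is controlled by the distance from $\omega_j$ to the nearest $\lambda_n$ (which is $\geq \tfrac{C}{2}e^{-\delta \lambda_{m_j}}$ by construction) and by the truncation height $T_j$. Taking $T_j \asymp e^{\delta \lambda_{m_j}}$ renders $\mathrm{err}_j$ negligible. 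Using the entireness of $\eta_D$, shift the contour to $\Re s = -\gamma_1$ for some $\gamma < \gamma_1$; only the simple pole at $s = 0$ is crossed, with residue $\eta_D(0) = S$. Thus $S_{m_j-1} - S$ decomposes as the shifted vertical integral on $\Re s = -\gamma_1$ plus horizontal segments at $\Im s = \pm T_j$ plus $\mathrm{err}_j$.

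The main obstacle is to bound the growth of $\eta_D$ on $\Re s = -\gamma_1$. Combining the local trace formula \cite[Theorem~2.1]{Petkov2024} (applied to Fourier cutoffs built from the $\rho_j$ of Proposition~1.1 with $e^{\beta \ell_j} \leq m_j \leq e^{2\beta \ell_j}$) with the summability-by-typical-means framework yields, under the entireness assumption, a polynomial bound $|\eta_D(-\gamma_1 + i\tau)| \leq A_{\gamma_1}(1 + |\tau|)^{\kappa}$ in which $\kappa$ depends essentially linearly on $\gamma_1$ with slope $h$, reflecting the frequency density encoded in \eqref{eq:1.2}--\eqref{eq:1.4}. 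The shifted vertical integral is therefore bounded by $C\, T_j^{\kappa + 1} e^{-\gamma_1 \omega_j} \asymp \exp\bigl(((\kappa+1)\delta - \gamma_1)\lambda_{m_j}\bigr)$, and the horizontal segments are dominated by this quantity. The hypothesis $\delta > h + 1$ is precisely what permits choosing $\gamma_1$ large enough that $(\kappa + 1)\delta - \gamma_1 < -\gamma$, yielding $|S - S_{m_j - 1}| = O(e^{-(\gamma + \varepsilon)\lambda_{m_j}})$ for some $\varepsilon > 0$. For $j$ large this contradicts \eqref{eq:1.9}, so $\eta_D$ cannot be entire.
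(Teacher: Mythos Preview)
Your contradiction strategy via a truncated Perron formula and contour shift is plausible in spirit, but the argument breaks at the growth bound. You assert that the local trace formula, combined with the cutoffs $\rho_j$ of Proposition~1.1, yields $|\eta_D(-\gamma_1+i\tau)|\leq A_{\gamma_1}(1+|\tau|)^{\kappa}$ with $\kappa$ growing \emph{linearly} in $\gamma_1$ with slope~$h$. This is neither justified nor correct: the $\rho_j$ are time-domain cutoffs unrelated to vertical-line growth, and the trace formula (see (2.1)--(2.2)) actually gives a \emph{fixed} exponent $2d^2+2d-1$ independent of the strip width; Phragm\'en--Lindel\"of then reduces this to any $\nu>0$ (see (2.4)). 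Worse, your own arithmetic fails with your stated bound: if $\kappa\sim h\gamma_1$, then $(\kappa+1)\delta-\gamma_1\sim\gamma_1(h\delta-1)$, which tends to $-\infty$ only if $h\delta<1$, directly contradicting $\delta>h+1$ whenever $h\geq(\sqrt5-1)/2$. So the hypothesis $\delta>h+1$ does not play the role you assign it, and your final inequality is not obtained.

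The paper's proof avoids Perron truncation altogether. From the bound $|\eta_D(\sigma+it)|\leq B_{\sigma,\nu}(1+|t|)^{\nu}$ for every $\nu>0$, the Hardy--Riesz theorem on typical means (Theorem~2.1) forces the abscissa of $(\lambda,1)$-summability to be $\sigma_1=-\infty$; Kuniyeda's formula then gives $|R(u)|\leq e^{-\gamma_1 u}$ for all large $u$ and any prescribed $\gamma_1$, where $R(u)=\sum_{\lambda_n>u}a_n(\lambda_n-u)$. The decisive elementary step is the identity
\[
R(u_{m_j-1})-R(u_{m_j})=a_{m_j-1}(\lambda_{m_j-1}-u_{m_j-1})+(u_{m_j}-u_{m_j-1})\sum_{n\geq m_j}a_n,
\]
with $u_{m_j-1}<\lambda_{m_j-1}<u_{m_j}<\lambda_{m_j}$ chosen so that the first term on the right is negligible while $u_{m_j}-u_{m_j-1}=\lambda_{m_j}-\lambda_{m_j-1}\geq Ce^{-\delta\lambda_{m_j}}$. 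Together with (1.9) this yields $Ce^{-(\delta+\gamma)\lambda_{m_j}}\leq C'e^{-\gamma_1\lambda_{m_j}}$, a contradiction for $\gamma_1>\delta+\gamma$. Note that the condition $\delta>h+1$ is not used in this contradiction step; it enters only through the discussion of the existence of gaps.
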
   
    The condition $\sigma_c < 0$ is not a restriction since if $\sigma_c \geq 0,$ the Dirichlet series
 \[ \eta_D(s + \sigma_c + 1) = \sum_n (a_n e^{-\lambda_n (\sigma_c + 1)}) e^{-\lambda_n s}= \sum_n b_n e^{-\lambda_n s}\]
is convergent for $\Re s > -1$, hence it has a negative abscissa of convergence $\sigma_b$ and $\eta_D(s + \sigma_c + 1)$ is entire if and only if $\eta_D(s)$ is entire.  Moreover, in the proof of Theorem 1.1 (see section 3), assuming $\eta_D$ entire,  one has the property 
 \[ \forall A < \sigma_c, \: \exists C_A > 0,\: \: |\eta_D(s)| \leq C_A( 1 + |\Im s|), \: \Re s \geq A\]
 which is satisfied also for $\eta_D(s + \sigma_c + 1)$ with another constants $B_A$. Thus we may apply Theorem 1.1 if instead of (\ref{eq:1.9}) one has the estimate
\begin{equation} \label{eq:1.10}
 |\sum_{n \geq m_j} b_n| \geq e^{-\gamma_1 \lambda_{m_j}},\: -\gamma_1 < \sigma_b .
 \end{equation}
 
   The assumptions on $\lambda_n$ and $a_n$ in Theorem 1.1  are satisfied if {\it Bohr condition} (see for instance, \cite[\S3.13]{Defant2022})
  \[ (BC)\:\:\: \:\:\:\exists C_1 > 0,\: \exists \ell > 0, \:\: \forall n > 0, \: \lambda_{n + 1} - \lambda_n \geq C_1e^{-\ell \lambda_n} \]
   holds. Indeed, it is well known that in the case $\sigma_c < 0$, one has the representation
   \[ \sigma_c = \limsup_{n \to \infty}  \frac{ \log |\sum_{n \geq m} a_n|}{\lambda_m} .\]
For small $\ep > 0$ this implies the existence of a sequence $m_j \nearrow \infty$ such that
\[ |\sum_{n \geq m_j} a_n| \geq e^{(\sigma_c - \ep) \lambda_{m_j} }
\]
and we obtain (\ref{eq:1.9}) with $-\gamma = \sigma_c - \ep.$ 

The condition (BC) is very restrictive. The advantage of Theorem 1.1 is that (\ref{eq:1.8}) is always satisfied (see Section 3) for infinite number of frequencies $\lambda_{m_j- 1}, \: \lambda_{m_j}$ and the separation by $e^{-\delta \lambda_j}$ of some frequencies $\lambda_{m_j}$ only on  the left is less restrictive than a separation  of all frequencies on both sides.  Applying Theorem 1.1, we obtain the following 
\begin{corr} Suppose $\sigma_c < 0.$ Then if
\begin{equation} \label{eq:1.11}
 \liminf_{m \to \infty} \frac{ \log |\sum_{n \geq m} a_n|}{\lambda_m} > - \infty,
 \end{equation}
 the function $\eta_D(s)$ cannot be prolonged as entire function.
\end{corr}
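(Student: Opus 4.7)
The plan is to verify both hypotheses (\ref{eq:1.8}) and (\ref{eq:1.9}) of Theorem~1.1 from the single assumption (\ref{eq:1.11}), and then invoke Theorem~1.1 directly. To produce (\ref{eq:1.9}), set
\[
L := -\liminf_{m\to\infty} \frac{\log |\sum_{n\geq m} a_n|}{\lambda_m},
\]
which is finite by (\ref{eq:1.11}). Using the representation $\sigma_c = \limsup_{m\to\infty} \log|\sum_{n\geq m} a_n|/\lambda_m$ recalled just before the statement (valid because $\sigma_c < 0$), and the elementary inequality $\liminf \leq \limsup$, I obtain $-L \leq \sigma_c$. Fix any $\ep > 0$; by the definition of $\liminf$ there exists $M_\ep$ such that
\[
\Big|\sum_{n\geq m} a_n\Big| \geq e^{-(L+\ep)\lambda_m} \quad \text{for every } m \geq M_\ep.
\]
Now set $\gamma := L + \ep$. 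Then $-\gamma = -L-\ep < -L \leq \sigma_c$, so the constraint $-\gamma < \sigma_c$ required by Theorem~1.1 is met, and (\ref{eq:1.9}) holds for \emph{every} integer $m \geq M_\ep$, hence along any subsequence $(m_j)$ of such integers.

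Next I would supply (\ref{eq:1.8}). For this I invoke the statement the author emphasizes immediately after Theorem~1.1, namely that (\ref{eq:1.8}) with some constant $C > 0$ and some $\delta > h+1$ is automatically satisfied for an infinite set of indices --- the result referenced by ``(see Section~3)''. Let $(m_j)_{j \geq 0}$ be such a sequence, with $m_j \nearrow \infty$; by discarding finitely many initial terms I may assume $m_j \geq M_\ep$ for every $j$.

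Along this common sequence, both (\ref{eq:1.8}) and (\ref{eq:1.9}) hold simultaneously with the constants $C$, $\delta > h+1$, and $\gamma$ selected above, so Theorem~1.1 immediately yields that $\eta_D(s)$ cannot be prolonged as an entire function. The essential input in this plan is the universal gap estimate (\ref{eq:1.8}) for infinitely many $m_j$, which is imported from Section~3 and is not re-proved here; the rest is merely a rearrangement of the $\liminf$ hypothesis, with the only subtle point being the selection of $\gamma$ so that $\gamma > L$ and $-\gamma < \sigma_c$ hold at the same time --- possible precisely because of the standing assumption $\sigma_c < 0$, which is why that hypothesis appears in the corollary.
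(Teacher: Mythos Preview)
Your proof is correct and follows the same route as the paper's brief argument at the end of Section~3: take the infinitely many indices for which the gap estimate (\ref{eq:1.8}) is guaranteed, use the $\liminf$ hypothesis to supply (\ref{eq:1.9}) along that same subsequence, and invoke Theorem~1.1. One small expository point: the hypothesis $\sigma_c<0$ is needed to justify the tail-sum representation $\sigma_c=\limsup_m \log|\sum_{n\geq m}a_n|/\lambda_m$ (which then gives $-L\leq\sigma_c$), not for the inequality $-\gamma<\sigma_c$ itself as your final sentence suggests.
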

In Section 4 for $\delta > h + 2$ we construct intervals $I(\lambda_k, \delta) \subset [b, b+1], \: b \geq b_0$ with {\it clustering frequencies} and we obtain Corollary 4.1. We have infinite number of such intervals. Moreover, under some geometrical assumptions described in \cite[Section 8]{petkov2012}  the number of such intervals is exponentially increasing when $b \to \infty.$ Finally, assuming that the coefficients $a_n$ have a lower bound (\ref{eq:4.4}), we prove Corollary 4.2 and we show that for every interval $I(\lambda_k, \delta)$ we have 4 possibilities concerning the  behaviour of the corresponding sums. For 3 of these 4 possibilities it is possible to find frequencies satisfying (\ref{eq:1.8}), (\ref{eq:1.9}) (see Proposition 4.1).

 The paper is organised as follows. In Section 2 we recall the local trace formula for $\eta_D.$ Assuming $\eta_D$ entire, we deduce the estimates (\ref{eq:2.3}). This makes possible to prove that that the abscissa of $k-$ summability $\sigma_k$ of $\eta_D$ is $-\infty.$ In Section 3 we prove Theorem 1.1. Section 4 is devoted to intervals $I(\lambda_k, \delta)$ with clustering frequencies and the constructions of frequencies satisfying (\ref{eq:1.8}) and (\ref{eq:1.9}).
 
  \section*{Acknowledgements} We would like to thank the referee for his/her valuable remarks and suggestions. 

\section{Summation by typical means of $\eta_D$}

In this section we apply the results of \cite{dyatlov2016pollicott}, {\cite[\S6.1]{jin2023resonances} and \cite{Petkov2024} for vector bundles. For our exposition we need only the local trace formula  containing the poles of the meromorphic continuation of cut off resolvents ${\bf 1}_{\tilde{V}_u} ( -i \Pbf _{k, \ell}- s)^{-1}{\bf 1}_{\tilde{V}_u}$ of some operators 
\[-i \Pbf_{k, \ell,q},\: 0 \leq k \leq d,\: 0 \leq \ell \leq d^2 - d, \: q = 1, 2.\] 
Here $\tilde{V}_u$ us a neighborhood of the trapping set $\tilde{K}_u.$ 
The precise definitions of $\Pbf_{k, \ell, q}, \: \tilde{K}_u$ and the corresponding setting are complicated and they are not necessary for the analysis below and we prefer to refer to \cite[Section 2]{Petkov2024} for the corresponding definitions and details. Denote by  $\Res (- i \Pbf_{k, \ell, q})$ the set of the poles of the meromorphic continuation of the corresponding cut off resolvents.

 For every $ A > 0$ and any $0 < \epsilon \ll 1$ we have the following local trace formula (see \cite[Theorem 2.1]{Petkov2024})

\begin{eqnarray} \label{eq:2.1}
\sum_{k = 0}^d \sum_{\ell = 0}^{d^2 - d} \sum_{\mu \in \Res(- i \Pbf_{k, \ell, 2}), \Im \mu > -A} ( -1)^{k + \ell}e^{- i\mu t}\nonumber\\
-\sum_{k = 0}^d \sum_{\ell = 0}^{d^2 - d} \sum_{\mu \in \Res(- i \Pbf_{k, \ell, 1}), \Im \mu > -A} ( -1)^{k + \ell}e^{- i\mu t} \\
+ F_{A}(t)\nonumber = \mathcal F_D(t),\: t > 0.
\end{eqnarray}
Here $F_A(t) \in {\mathcal S}'(\R)$ is  supported in $[0, \infty)$, the Laplace-Fourier transform $\hat{F}_A(\lambda)$ of  $ F_A(t)$  is holomorphic for $\Im \lambda < A - \epsilon$ and satisfies the estimate
\begin{equation}\label{eq:2.2}
|\hat{F}_A(\lambda) | = {\mathcal O}_{A, \ep} (1 + |\lambda|)^{2d^2 + 2d - 1 + \epsilon},\: \Im \lambda < A - \epsilon.
\end{equation}
 
 Notice that the poles in $\Res(-i \Pbf_{k, \ell, q})$ are simple with positive integer residues \cite[Theorem 1]{chaubet2022}. For the sums with fixed $q$ the cancellations in (\ref{eq:2.1}) could appear only between  the terms with $k + \ell$ odd and $k + \ell$ even.  On the other hand, taking the difference of sums with $q = 2$ and $q = 1$ we obtain more cancelations. 
 
 If the following we assume that $\eta_D$ can be a prolonged as entire function. In particular, 
 \[\eta_D( -i \lambda) = \langle \mathcal F_D , e^{i t \lambda}\rangle = \sum_{\gamma \in \mathcal P} \frac{(-1)^{m(\gamma)}\tau^\sharp(\gamma) e^{i\lambda \tau(\gamma)}} {|\det(\id -  P_{\gamma})|^{1/2}},\: \Im \lambda \geq C \gg 1\]
 admits an analytic continuation for $\Im \lambda < C.$ For fixed $A > 0$ the function $\eta_D( - i \lambda)$ has no poles $ \mu$ with $\Im \mu > - A$ and in (\ref{eq:2.1}) all terms involving poles will be canceled. Consequently, from (\ref{eq:2.1}) we obtain 
 \[\eta_D(- i \lambda) = \hat{F}_A(-\lambda), \: \Im \lambda > - A + \epsilon.\]
  Setting $- i\lambda = s = \sigma + i t,\: \sigma \in \R, \:t \in \R$, this implies
 \begin{equation} \label{eq:2.3}
 |\eta_D(s)| \leq C_A(1 + |s|)^{2d^2 + 2d - 1} \leq B_A(1+ |t|)^{2 d^2 + 2d - 1},\: \sigma \geq -A+\epsilon.
\end{equation}
Here we used the fact that $|\eta_D(s)|$ is bounded for $\sigma \geq C_0 > 0$ with sufficiently large $C_0 > 0$ and
$|s| \leq \max\{A, C_0\} + |t|$ for $- A \leq \sigma \leq C_0.$ We may apply the above argument for every $A > 0,$ so the bound (\ref{eq:2.3}) holds for every $A > 0$ with constants $B_A$ depending of $A$. The crucial point is that the power $2 d^2 + 2d - 1$ is {\it independent} of $A$. 

Applying the Phragmént- Lindel\"{o}f principle for entire function $\eta_D(s)$ in the strip
\[ \{z \in \C: -A + \epsilon \leq \Re z \leq C_0\},\]
one deduces
\[|\eta_D(\sigma + it) | \leq D_{\sigma, A} ( 1 + |t|)^{\kappa(\sigma)},\: -A + \epsilon \leq \sigma \leq C_0\]
with 
\[ \kappa(\sigma) = \frac{C_0 - \sigma}{C_0 + A - \epsilon} (2d^2 + 2d - 1), \: -A  +\epsilon \leq \sigma  \leq C_0.\]
For fixed $\sigma$, taking $A$ sufficiently large we obtain for every small $0 < \nu \ll 1$ the estimate
\begin{equation} \label{eq:2.4}
|\eta_D(\sigma + it) | \leq B_{\sigma, \nu} ( 1 + |t|)^{\nu},\: \sigma \leq C_0.
\end{equation}

Next we recall the {\it summation by typical means} of Dirichlet series (see \cite[Section IV, \S2]{hardy1964}, \cite[Section 2]{Defant2022} for more details). For $k > 0$ consider 
\[ C_{\lambda}^k(u) = \sum_{\lambda_n < u} (u - \lambda_n)^k a_n e^{-\lambda_n s} .\]
We say that the series $\sum_{n= 1}^{\infty} a_n e^{-\lambda_n s}$ is $(\lambda, k)$ summable if
\[ \lim_{u \to \infty} \frac{C_{\lambda}^k(u)}{u^k} = C.\]
There exists a number $\sigma_k$ such that the series is $(\lambda, k)$ summable for $\sigma > \sigma_k$ and not $(\lambda, k)$ summable for $\sigma < \sigma_k$ (see \cite[Theorem 26]{hardy1964}).The number $\sigma_k$ is called abscissa of $k$- summability of the series.
We will apply the following 
\begin{theo}[Theorem 41, \cite{hardy1964}] Suppose that the series $f(s) = \sum_{n = 1}^{\infty} a_n e^{-\lambda_n s}$ admits an analytic continuation for $\sigma > \eta.$ Suppose further that $k$ and $k'$ are positive numbers such that $k' < k$ and for all small $\delta$ we have
\[ |f(s)| \leq \mathcal C_{\delta}( 1 + |s|)^{k'}\]
uniformly for $ \sigma \geq \eta + \delta > \eta.$ Then $f(s)$ is $(\lambda, k)$ summable for $\sigma > \eta.$
\end{theo}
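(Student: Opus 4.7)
The plan is to combine a Perron--Mellin contour integral representation of $C_\lambda^k(u)$ with the hypothesized polynomial bound $|f(s+w)|\leq \mathcal{C}_\delta(1+|s+w|)^{k'}$ on vertical lines in the half-plane of analytic continuation. First, fix $s_0$ with $\sigma_0 = \Re s_0 > \eta$, and let $c>0$ be so large that $\sigma_0 + c$ exceeds the abscissa of absolute convergence of $f$. Starting from the classical identity
$$\frac{1}{2\pi i}\int_{c-i\infty}^{c+i\infty}\frac{e^{xw}}{w^{k+1}}\,\dd w = \frac{x_+^k}{k!},$$
applied with $x = u - \lambda_n$, and interchanging sum and integral (justified by absolute convergence on the line $\Re(s_0+w) = \sigma_0+c$), one obtains the representation
$$C_\lambda^k(u) = \frac{k!}{2\pi i}\int_{c-i\infty}^{c+i\infty} f(s_0+w)\,\frac{e^{uw}}{w^{k+1}}\,\dd w.$$

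Next I would shift the contour to a vertical line $\Re w = -\delta_0$ with $0 < \delta_0 < \sigma_0 - \eta$. By the hypothesized analytic continuation of $f$ to $\sigma > \eta$, the integrand is holomorphic on the resulting strip except at $w = 0$, where it has a pole of order $k+1$. Reading off the residue from the Taylor expansions of $f(s_0+w)$ and $e^{uw}$ gives
$$\operatorname{Res}_{w=0}\Bigl[k!\,f(s_0+w)\,\frac{e^{uw}}{w^{k+1}}\Bigr] = \sum_{j=0}^{k}\binom{k}{j}f^{(j)}(s_0)\,u^{k-j},$$
with leading term $f(s_0)u^k$. The shift itself requires showing that the contribution of the horizontal segments $\Im w = \pm T$ vanishes as $T\to\infty$; on these finite segments the polynomial bound on $f$, the trivial estimate $|e^{uw}| \leq e^{uc}$, and the factor $1/|w|^{k+1}$ yield the needed decay. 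On the shifted contour, choose $\delta$ small enough that $\sigma_0 - \delta_0 \geq \eta + \delta$, so that $|f(s_0+w)|\leq \mathcal{C}_\delta(1+|w|)^{k'}$ and $|e^{uw}| = e^{-u\delta_0}$. Since $k' - k - 1 < -1$, the shifted integral is bounded by
$$C\,e^{-u\delta_0}\int_{\R}(1+|t|)^{k'-k-1}\,\dd t = O(e^{-u\delta_0}).$$

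Combining the residue computation with this estimate and dividing by $u^k$ gives
$$\frac{C_\lambda^k(u)}{u^k} = f(s_0) + \sum_{j=1}^{k}\binom{k}{j}f^{(j)}(s_0)\,u^{-j} + O(u^{-k}e^{-u\delta_0})\longrightarrow f(s_0)$$
as $u\to\infty$, proving $(\lambda,k)$-summability for every $s_0$ with $\Re s_0 > \eta$, with limit $f(s_0)$. The main technical obstacle, as I see it, is the rigorous justification of the contour shift: the Perron-type integral on $\Re w = c$ is typically only conditionally convergent after interchanging sum and integration, so one must work first with a truncated integral $\int_{c-iT}^{c+iT}$, verify the truncated sum identity up to an error vanishing as $T\to\infty$, and only then close the contour and pass to the limit with horizontal-segment bounds uniform in the height. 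Once this step is done, the strict inequality $k' < k$ is exactly what makes the shifted integral absolutely convergent, and the rest of the argument reduces to the residue bookkeeping above.
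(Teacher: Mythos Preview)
The paper does not prove this theorem itself; it is quoted from Hardy--Riesz (where it is stated without proof) and the reader is referred to \cite{Defant2022} for a modern treatment. So there is no in-paper argument to compare against directly.

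Your Perron--Mellin approach is the classical one and is essentially correct when $k$ is a positive \emph{integer}, but the theorem as stated allows any real $k>0$. For non-integer $k$ the function $w^{-(k+1)}$ has a branch point at $w=0$, not a pole of order $k+1$, so your residue formula
\[
\operatorname{Res}_{w=0}\Bigl[k!\,f(s_0+w)\,\frac{e^{uw}}{w^{k+1}}\Bigr] = \sum_{j=0}^{k}\binom{k}{j}f^{(j)}(s_0)\,u^{k-j}
\]
is meaningless and the contour cannot simply be pushed across $w=0$. In that case one must deform around the branch cut (a Hankel-type contour along the negative real axis), and extracting the main term $f(s_0)u^{k}$ from the loop integral is where the Riesz theory becomes genuinely more delicate; this is precisely the content handled in the references the paper cites. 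Since the paper only invokes the result with $k=1$ (Proposition~2.1 is applied in the proof of Theorem~1.1 via the formula \eqref{eq:3.2} for $\sigma_1$), your argument does cover what is actually used here.

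One small correction: your concern about conditional convergence of the Perron integral on $\Re w = c$ is misplaced. For $k>0$ the factor $|w|^{-(k+1)}$ is integrable along the vertical line, and since $\sigma_0 + c$ exceeds the abscissa of absolute convergence the series $\sum_n |a_n|e^{-\lambda_n(\sigma_0+c)}$ converges; Fubini then justifies the sum--integral interchange outright, with no need for truncation.
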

 
In fact, the above theorem in \cite{hardy1964} is given without proof. The reader may consult Corollary 3.8 and Corollary 3.9 in \cite{Defant2022} for a proof and other results related to Theorem 2.1 and  $(\lambda, k)$ summability.
 The estimates (\ref{eq:2.4}) combined with Theorem 2.1 imply the following
  \begin{prop}
  If $\eta_D(s)$ can be prolonged as entire function, for every $k > 0$ the  Dirichlet series $(\ref{eq:1.6})$ has abscissa of $k-$summability $\sigma_k = - \infty$.  
  \end{prop}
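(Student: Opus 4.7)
The plan is to fix an arbitrary $k>0$ and an arbitrary $\eta\in\R$ and show that $\eta_D$ is $(\lambda,k)$-summable for $\sigma>\eta$, whence $\sigma_k\le\eta$ for every $\eta$, giving $\sigma_k=-\infty$. The mechanism is Theorem 2.1 applied with the polynomial bound (\ref{eq:2.4}), so the whole task reduces to producing a uniform estimate $|\eta_D(s)|\le \mathcal C_\delta(1+|s|)^{k'}$ with $k'<k$ valid for $\sigma\ge\eta+\delta$.

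First I would pick $k'\in(0,k)$, say $k'=k/2$. The estimate (\ref{eq:2.4}) gives, for any $\nu>0$ and any $\sigma\le C_0$, a bound $|\eta_D(\sigma+it)|\le B_{\sigma,\nu}(1+|t|)^\nu$, but the constant may depend on $\sigma$. To get the needed \emph{uniform} bound on $\{\eta+\delta\le\sigma\le C_0\}$ I would go back one step in the derivation and use directly the Phragmén-Lindelöf estimate
\[
|\eta_D(\sigma+it)|\le C_A(1+|t|)^{\kappa(\sigma)},\qquad \kappa(\sigma)=\frac{C_0-\sigma}{C_0+A-\epsilon}(2d^2+2d-1),
\]
valid on $-A+\epsilon\le\sigma\le C_0$ with $C_A$ independent of $\sigma$ in that strip (since it comes from the maximum of the boundary data). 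Given $\eta$ and $\delta>0$, I would choose $A$ large enough that $\kappa(\eta+\delta)<k'$; this is possible because $\kappa(\eta+\delta)\to 0$ as $A\to\infty$. Since $\kappa$ is decreasing in $\sigma$, this yields $\kappa(\sigma)<k'$ uniformly for $\sigma\in[\eta+\delta,C_0]$, and therefore
\[
|\eta_D(\sigma+it)|\le C_A(1+|t|)^{k'}\le C_A(1+|s|)^{k'}
\]
on this half-strip. On the remaining region $\sigma\ge C_0$ the function $\eta_D$ is bounded, so the same bound holds trivially.

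Combining the two regions, $|\eta_D(s)|\le \mathcal C(1+|s|)^{k'}$ uniformly for $\sigma\ge\eta+\delta$. Since $\eta_D$ is assumed entire, the hypotheses of Theorem 2.1 are satisfied with this $\eta$ and with $k,k'$ as above, so the series is $(\lambda,k)$-summable for $\sigma>\eta$. Because $\eta\in\R$ was arbitrary, $\sigma_k=-\infty$.

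The only substantive obstacle is verifying that the constant in the Phragmén-Lindelöf interpolation can be taken independent of $\sigma$ inside the strip; this is standard once one observes that the boundary data on $\Re z=C_0$ is bounded by a universal constant (from absolute convergence) and the boundary data on $\Re z=-A+\epsilon$ is controlled by $B_A(1+|t|)^{2d^2+2d-1}$ from (\ref{eq:2.3}). Everything else is a matter of choosing $A$ large enough, as above.
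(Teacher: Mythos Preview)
Your proof is correct and follows exactly the paper's approach, which consists of one sentence: apply Theorem~2.1 with the polynomial bound (\ref{eq:2.4}). You add the useful detail of securing a constant uniform in $\sigma$ on $\{\sigma\ge\eta+\delta\}$ by returning to the Phragm\'en--Lindel\"of step and fixing $A$ once $\eta+\delta$ is given (so that $\kappa(\sigma)\le\kappa(\eta+\delta)<k'$ throughout the strip with a single constant $C_A$); the paper leaves this implicit in writing $B_{\sigma,\nu}$.
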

  
   \section{Proof of Theorem $1.1$}
\def\J{J_{\delta}}
\def\G{G_{\delta}}
Throughout this section we assume that $\eta_D(s)$ can be prolonged as entire function.
  Choose $\delta >  h + 2$. 
 First, it is easy to see that in every interval $[b, b+ 1], b \geq b_0 \gg 1$ we have subintervals $[\alpha, \beta] \subset [b + b+ 1]$ of length greater than $e^{-\delta b}$ which does not contain frequencies. It is sufficient to write $[b, b+ 1]$ as an  union of $e^{\delta b}$ intervals of length $e^{-\delta b}$ and to use the bounds (\ref{eq:1.3}). 
  
    We have the following simple
   \begin{lemm} Fix $0 < \ep < 1/2$ and $0 < \eta < \frac{\ep}{12(1+\ep)}.$ There exists $b_0 \geq \max\{ 3/h, 1\}$ depending of $\eta$ so that for $\alpha \geq b_0$ we have
  \begin{equation} \label{eq:3.1}
    \sharp\{ \gamma \in \Pi: \alpha \leq \tau^{\sharp}(\gamma) \leq \alpha + \ep\}   > \frac{ \ep( 1- \eta)  e^{\alpha h}}{3(\alpha + \ep)}.   
\end{equation}   
  \end{lemm}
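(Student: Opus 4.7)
The plan is to control $\sharp\{\gamma\in\Pi:\alpha\leq\tau^\sharp(\gamma)\leq\alpha+\ep\}$ from below by the difference $N(\alpha+\ep)-N(\alpha)$ of the primitive counting function $N(\alpha):=\sharp\{\gamma\in\Pi:\tau^\sharp(\gamma)\leq\alpha\}$, and extract the claimed bound directly from the Parry--Pollicott-type asymptotic (\ref{eq:1.2}).

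First, I would fix an auxiliary parameter $\eta_1>0$, to be calibrated later in terms of $\eta,\ep,h$. By (\ref{eq:1.2}) there exists $b_0=b_0(\eta_1)\geq \max\{3/h,1\}$ such that for every $\alpha\geq b_0$ one has
\[
(1-\eta_1)\frac{e^{h\alpha}}{h\alpha}\;\leq\; N(\alpha)\;\leq\;(1+\eta_1)\frac{e^{h\alpha}}{h\alpha}.
\]
Applying the lower estimate at $\alpha+\ep$ and the upper estimate at $\alpha$ gives
\[
N(\alpha+\ep)-N(\alpha)\;\geq\;(1-\eta_1)\frac{e^{h(\alpha+\ep)}}{h(\alpha+\ep)}-(1+\eta_1)\frac{e^{h\alpha}}{h\alpha}.
\]

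Second, I would pull out the common factor $\frac{e^{h\alpha}}{h\alpha(\alpha+\ep)}$ on the right and apply the elementary inequality $e^{h\ep}\geq 1+h\ep$. This rewrites the bracket as $\ep(h\alpha-1)$ plus an error of the form $-\eta_1\bigl[\alpha(1+h\ep)+(\alpha+\ep)\bigr]$. The hypothesis $\alpha\geq 3/h$ forces $h\alpha\geq 3$, hence $h\alpha-1\geq\tfrac{2}{3}h\alpha$, so the main contribution alone is bounded below by $\tfrac{2\ep\,e^{h\alpha}}{3(\alpha+\ep)}$, that is, twice the target up to the $\eta_1$ terms.

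Third, I would choose $\eta_1=\eta_1(\eta,\ep,h)$ small enough that the error is absorbed into the slack between $\tfrac{2\ep}{3}$ and $\tfrac{\ep(1-\eta)}{3}$; using $\frac{\alpha+\ep}{\alpha}\leq 1+\ep h/3$ (from $\alpha\geq 3/h$) bounds the error coefficient uniformly in $\alpha$, and it is at this step that the smallness $\eta<\ep/(12(1+\ep))$ appears as the exact tolerance needed to keep the right-hand side above the claimed value. Enlarging $b_0$ if necessary upgrades the weak inequality to the strict one in (\ref{eq:3.1}), and $N(\alpha+\ep)-N(\alpha)$ differs from the quantity in (\ref{eq:3.1}) by at most one point (the possible contribution of $\tau^\sharp(\gamma)=\alpha$), which is harmless.

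The main obstacle is purely bookkeeping: the underlying mechanism is a pigeonhole applied to the prime ray counting asymptotic together with the elementary estimate $e^{h\ep}-1\geq h\ep$, but threading the parameters $\eta,\eta_1,\ep,h$ so that they land precisely on the constants $3$ in the denominator and $\ep/(12(1+\ep))$ in the hypothesis demands careful accounting.
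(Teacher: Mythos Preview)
Your approach is essentially the paper's: bound $N(\alpha+\ep)-N(\alpha)$ via the two-sided precision from (\ref{eq:1.2}), factor out $e^{h\alpha}/(h(\alpha+\ep))$, and use $e^{h\ep}>1+h\ep$ together with $h\alpha\geq 3$. The only difference is cosmetic: the paper takes the precision parameter to be $\eta$ itself rather than an auxiliary $\eta_1$, so the hypothesis $\eta<\ep/(12(1+\ep))$ enters directly in the step $(1+\ep/\alpha)(1+4\eta)\leq 1+2\ep h/3$; in your scheme with $\eta_1$ free that hypothesis on $\eta$ is in fact never used (any $\eta>0$ would do once $\eta_1$ is small), so your remark that it is the ``exact tolerance needed'' is a misreading of its role.
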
 
\begin{proof}  For $ x \geq b_0(\eta) \gg 1$ the asymptotics (\ref{eq:1.2}), imply the estimates
   \[\frac{e^{hx}}{hx} ( 1- \eta) \leq \sharp\{ \gamma \in \Pi: \tau^{\sharp}(\gamma) \leq x\} \leq \frac{e^{hx}}{hx} (1 + \eta).\]
   Therefore for $\alpha \geq b_0(\eta)$ we obtain
   \[\sharp\{ \gamma \in \Pi: \alpha \leq \tau^{\sharp}(\gamma) \leq \alpha + \ep\}  \geq \frac{e^{h(\alpha + \ep)}}{h(\alpha + \ep)}( 1 - \eta)  - \frac{e^{h \alpha}}{h\alpha }( 1 + \eta)\]
  \[> \frac{( 1- \eta) e^{\alpha h}}{h(\alpha + \ep) }\Bigl[ 1 + \ep h - \frac{(\alpha + \ep)(1+\eta)}{\alpha( 1 - \eta)}\Bigr].\]
  On the other hand, we have $\frac{1}{\alpha} \leq \frac{h}{3}$ and
  \[4 \eta \leq \frac{\ep h}{3(1 + \ep)} \leq  \frac{\ep h}{3( 1 + \frac{\ep}{\alpha})}.\]
  Then
  \[\frac{(\alpha + \ep)(1+\eta)}{\alpha( 1 - \eta)}  =\Bigl(1 + \frac{\ep}{\alpha}\Bigr) \Bigl( 1 + \frac{2\eta}{1 - \eta}\Bigr) \leq (1+ \frac{\ep}{\alpha})( 1 + 4\eta) \leq 1 + \frac{2\ep h}{3}\]
  and one deduces (\ref{eq:3.1}). \end{proof}

 {\it Proof of Theorem 1.1.}  We start with the formula for the abscissa of $k-$summability $\sigma_k < 0$ in the case when $k \in \N$ is an integer established by Kuniyeda \cite[Theorem E]{Kuniyeda2016}. More precisely,  we have
\begin{equation}\label{eq:3.2}
\sigma_k = \limsup_{u \to \infty} \frac{\log |R^k(u)|}{u^k},
 \end{equation}
 where $R^k(u) = \sum_{\lambda_n> u} a_n(\lambda_n- u)^k.$ We are interesting in the case $k = 1.$ Let $\delta, -\gamma$ be the constants given in (\ref{eq:1.8}), (\ref{eq:1.9}), respectively.
By Proposition 2.1, for $\eta_D(s)$ we have $\sigma_1 = - \infty$. We fix $\gamma_1 > 0$ so that  $- \gamma_1 < - \delta- \gamma - 1.$
Then (\ref{eq:3.2}) implies that
there exists $M = M(\gamma_1) > 1$ such that 
\[|R(u)| = |\sum_{\lambda_n> u} a_n(\lambda_n- u)| \leq e^{-\gamma_1 u}, \: \forall u \geq M.\]
Let 
\[ \lambda_{m_j} - \lambda_{m_j- 1} \geq Ce^{-\delta \lambda_{m_j}},\: \lambda_{m_j- 2} \geq M,\: |\sum_{n \geq m_j} a_n| \geq e^{-\gamma \lambda_{m_j}}.\]
Obviously, for $M$ large by using  (\ref{eq:3.1}), we get $\lambda_{m_j} - \lambda_{m_j - 1} < 1.$

Choose $u_{m_j - 1}, u_{m_j}$ so that $ \lambda_{m_j - 2} < u_{m_j- 1} < \lambda_{m_j- 1} < u_{m_j} < \lambda_{m_j}$ and write
\[ R(u_{m_j- 1}) - R(u_{m_j}) = a_{m_j- 1} (\lambda_{m_j- 1} - u_{m_j- 1})+ (u_{m_j} - u_{m_j- 1}) \sum_{\lambda_n > u_{m_j}} a_n.\]
We choose $\lambda_{m_j - 1} - u_{m_j-1}= \epsilon_j \ll 1$ sufficiently small  to arrange
\[|a_{m_j- 1}| (\lambda_{m_j- 1} - u_{m_j- 1}) \leq e^{- \gamma_1 \lambda_{m_j}}.\]
(Exploiting (\ref{eq:1.4}), we obtain an upper bound $|a_n| \leq e^{c \lambda_n}, \: \forall n$ with $c > 0$ independent of $\lambda_n$,  but this is not necessary for the estimation above.)
Next
\[u_{m_j} - u_{m_j - 1} = (u_{m_j} - \lambda_{m_j} )+ (\lambda_{m_j} - \lambda_{m_j - 1}) + (\lambda_{m_j - 1} - u_{m_j - 1}).\]
Taking $\epsilon_j$ very close to 0, if it is necessary, we choose  $u_{m_j}=   \lambda_{m_j}  - \epsilon_j,$ and deduce
\[ u_{m_j} - u_{m_j- 1} = \lambda_{m_j} - \lambda_{m_j - 1} \geq Ce^{-\delta \lambda_{m_j}}.\]
Then
\[Ce^{(-\delta - \gamma) \lambda_{m_j}} \leq (u_{m_j} - u_{m_j- 1}) |\sum_{n \geq m_j} a_n| \]
\[= |R(u_{m_j- 1}) - R(u_{m_j}) - a_{m_j- 1} (\lambda_{m_j- 1} - u_{m_j- 1})| \]
\[\leq e^{-\gamma_1 u_{m_j -1}} + e^{-\gamma_1 u_{m_j}} + e^{-\gamma_1 \lambda_{m_j}}\]
\[ \leq \Bigl(2 e^{\gamma_1(\lambda_{m_j} - u_{m_j - 1})} + 1\Bigr) e^{-\gamma_1 \lambda_{m_j}}.\]
Since 
\[\lambda_{m_j} - u_{m_j -1} = \lambda_{m_j} - \lambda_{m_j - 1} + \epsilon_j  < 3/2,\]
the above inequality yields
\[1 \leq \frac{1}{C}\Bigl(2 e^{\frac{3}{2} \gamma_1} +1\Bigr) e^{( - \gamma_1 + \delta + \gamma) \lambda_{m_j}}\]
and we obtain a contradiction for $\lambda_{m_j} \to \infty.$ This completes the proof. 

Since (\ref{eq:1.8}) is always satisfied for suitable frequencies  $\lambda_{m_j -1}, \lambda_{m_j}$ (see Section 4), exploiting (\ref{eq:1.11}), we may arrange the condition (\ref{eq:1.9}) for $\lambda_{m_j}$ large enough. An application of Theorem 1.1 yields Corollary 1.1.

\section{Intervals with clustering frequencies}

  We fix  $\delta > h + 2$ and $e^{-b} <\ep \ll 1/2$ and consider an interval $[b, b+1], \: b \geq  b_0(\ep).$ 
  Let $\lambda_k \in [ b + e^{-b}, b+ 1 - e^{-b}].$ 
  To examine the clustering around  $\lambda_k$, we construct some sets.  Introduce
 \[\J(\mu) = (\mu, \mu + \e^{- \delta b}).\]
 If $\lambda_{k+ 1}  \notin \J(\lambda_k),$ we stop the construction on the right. If $\lambda_{k+1} \in J_{\delta}(\lambda_k)$, one considers $J_{ \delta}(\lambda_{k+1}). $  In the case $\lambda_{k+2} \notin \J(\lambda_{k+1})$, we stop the construction. Otherwise, we continue with $\J(\lambda_{k+2})$ up to the situation when $\lambda_{k + q + 1} \notin \J(\lambda_{k+ q}).$ It is clear that such $q$ exists.We repeat the same construction moving on the left introducing
  \[\G(\mu) = (\mu- \e^{- \delta b}, \mu).\]
We stop when $\lambda_{k - p- 1} \notin \G(\lambda_{k-p}).$ Set
$I(\lambda_k, \delta) = [\lambda_{k - p}, \lambda_{k +q}].$
 The integers $p, q$ depend on $\lambda_k$, but we omit this in the notations below. Clearly, if we take another frequency $\lambda_{k'} \in I(\lambda_k, \delta),$ we obtain by the above construction the same interval. It is not excluded that $I(\lambda_k, \delta) = \{\lambda_k\}.$ In the particular case, one has $q = p = 0.$
 The number of the frequencies in $I(\lambda_k, \delta)$ is bounded by $e^{(h + \ep)(b + 1)}$
 and 
 \begin{equation} \label{eq:4.1}
  \lambda_{k+q} - \lambda_{k-p} \leq e^{(h - \delta+ \ep)b+ (h + \ep)} < e^{-b}, \: b \geq b_0(\ep).
 \end{equation}
 This estimate implies $\lambda_{k + q} < b+ 1,\: \lambda_{k - p} > b,$ so $I(\lambda_k, \delta) \subset (b, b+1).$
By Lemma 3.1, the intervals without frequencies have lengths less than $\ep$. Let $M(\ep, \delta, b)$ be the number of the sets
\[ I(\lambda_k, \delta) \cup (\lambda_{k + q}, \lambda_{k+ q + 1}), \: e^{-\delta b} \leq \lambda_{k + q + 1} -\lambda_{k+ q} < \ep, \: \lambda_k \in [b + e^{-b}, b+1 - e^{-b}].\]
Taking the union of such sets, we obtain
\[M(\ep, \delta, b) ( \ep + e^{- b}) \geq 1  - 2\ep- 2 e^{-b}.\]
 For large $b$ thus implies
\begin{equation} \label{eq:4.2}
 M(\ep,\delta, b) > \frac{1- 2\ep- 2 e^{-b}}{\ep + e^{-b}} = \frac{1}{\ep} - 2 + \mathcal O_{\ep}( e^{-b}).
\end{equation}
Hence we have at least $\Bigl[ \frac{1}{\ep}\Bigr] - 2$ frequencies $\lambda_{m_j}\in [b + e^{- b}, b+1 - e^{-b}]$ with
\begin{equation} \label{eq:4.3}
\lambda_{m_j - p_j} - \lambda_{m_j - p_j - 1} > e^{- \delta \lambda_{m_j - p_j - 1}},\: \lambda_{m_j + q_j + 1}- \lambda_{m_j + q_j} > e^{-\delta \lambda_{m_j + q_j}} ,
\end{equation}
where $[a]$ denotes the entire part of $a$.

Now let $\gamma \gg 1$ be fixed. Given an interval $I(\lambda_k, \delta) \subset (b, b+1),$ we have 2 possibilities:\\
          \[(i) \: \:\:|\sum_{n \geq k - p} a_n| \geq  e^{-\gamma \lambda_{k - p}},\:\:
         (ii) \: \:\:|\sum_{n \geq k - p} a_n| < e^{-\gamma \lambda_{k - p}}.\]  
         In the case (i) the conditions (\ref{eq:1.8}), (\ref{eq:1.9}) are satisfied for $\lambda_{k-p -1}$ and $\lambda_{k-p}.$ If one has (ii), and 
         $|\sum_{n \geq k+ q + 1} a_n| < e^{-\gamma \lambda_{k + q + 1}},$
          by triangle inequality one deduces
         \[|\sum_{n = k- p}^{k+q} a_n| \leq e^{- \gamma \lambda_{k-p}} + e^{- \gamma \lambda_{k+ q +1}} < 2 e^{-\gamma\lambda_{k-p}}.\]
         Thus if (ii) holds, and 
           $|\sum_{n = k- p}^{k+q} a_n|  \geq 2e^{-\gamma \lambda_{k- p}}$
           the conditions (\ref{eq:1.8}), (\ref{eq:1.9}) are satisfied for $\lambda_{k +q}$ and $\lambda_{k+ q + 1}.$           
                  Taking into account (\ref{eq:4.3}) and applying Theorem 1.1, we obtain the following
         \begin{corr} Suppose $\sigma_c < 0$. Suppose that there exist constants $\delta > h + 2, \gamma \gg 1$ and a sequence of intervals 
         \[I(\lambda_{m_j}, \delta) = [\lambda_{m_j - p_j}, \lambda_{m_j + q_j}], \: \lambda_{m_j} \nearrow \infty\]
         satisfying $(\ref{eq:4.3})$ 
          such that
          \[|\sum_{n = m_j- p_j}^{m_j+q_j } a_n|  \geq 2 e^{- \gamma \lambda_{m_j- p_j}}.\] 
          Then $\eta_D$ cannot be prolonged as entire function.       
         \end{corr}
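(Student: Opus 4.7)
The plan is to extract from each cluster interval $I(\lambda_{m_j},\delta)$ a pair of consecutive frequencies to which Theorem 1.1 applies. The separation condition (\ref{eq:1.8}) is available at either endpoint of the interval, since by (\ref{eq:4.3}) one has $\lambda_{m_j-p_j}-\lambda_{m_j-p_j-1}>e^{-\delta\lambda_{m_j-p_j-1}}\geq e^{-\delta\lambda_{m_j-p_j}}$ on the left and $\lambda_{m_j+q_j+1}-\lambda_{m_j+q_j}>e^{-\delta\lambda_{m_j+q_j}}\geq e^{-\delta\lambda_{m_j+q_j+1}}$ on the right. So the task reduces to checking that the tail-sum condition (\ref{eq:1.9}) holds at the index $N=m_j-p_j$ or at $N=m_j+q_j+1$, for infinitely many $j$.

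For each fixed $j$ I would argue by dichotomy. If already $|\sum_{n\geq m_j-p_j}a_n|\geq e^{-\gamma\lambda_{m_j-p_j}}$, there is nothing to do and the pair $\lambda_{m_j-p_j-1}<\lambda_{m_j-p_j}$ works. Otherwise, starting from the identity
\[
\sum_{n=m_j-p_j}^{m_j+q_j}a_n=\sum_{n\geq m_j-p_j}a_n-\sum_{n\geq m_j+q_j+1}a_n,
\]
the hypothesis $|\sum_{n=m_j-p_j}^{m_j+q_j}a_n|\geq 2e^{-\gamma\lambda_{m_j-p_j}}$ combined with the triangle inequality and the case assumption $|\sum_{n\geq m_j-p_j}a_n|<e^{-\gamma\lambda_{m_j-p_j}}$ forces $|\sum_{n\geq m_j+q_j+1}a_n|\geq e^{-\gamma\lambda_{m_j-p_j}}\geq e^{-\gamma\lambda_{m_j+q_j+1}}$, where the last step uses $\gamma>0$ together with $\lambda_{m_j-p_j}\leq\lambda_{m_j+q_j+1}$. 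Thus (\ref{eq:1.9}) is satisfied at $N=m_j+q_j+1$ and the pair $\lambda_{m_j+q_j}<\lambda_{m_j+q_j+1}$ works.

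Extracting a subsequence along which the same case occurs for all $j$, and noting that by (\ref{eq:4.1}) the chosen endpoints differ from $\lambda_{m_j}$ by at most $e^{-b}$ and therefore tend to infinity as well, it remains only to verify the numerical hypotheses of Theorem 1.1: the assumption $\delta>h+2$ of the corollary implies $\delta>h+1$ as required, and $\gamma\gg 1$ is to be taken large enough that $-\gamma<\sigma_c$ (which is possible because $\sigma_c<0$ is fixed). The main obstacle I anticipate is essentially bookkeeping: making sure the inequalities in the triangle-inequality step point the right way, in particular checking that replacing $e^{-\gamma\lambda_{m_j-p_j}}$ by $e^{-\gamma\lambda_{m_j+q_j+1}}$ produces a weaker (hence permissible) bound. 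Once that sign check is in place, Theorem 1.1 directly yields the non-entireness of $\eta_D$.
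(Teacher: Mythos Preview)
Your argument is correct and follows essentially the same route as the paper: the dichotomy on whether $|\sum_{n\geq m_j-p_j}a_n|\geq e^{-\gamma\lambda_{m_j-p_j}}$, combined with the identity $\sum_{n=m_j-p_j}^{m_j+q_j}a_n=\sum_{n\geq m_j-p_j}a_n-\sum_{n\geq m_j+q_j+1}a_n$ and the triangle inequality, is exactly the paper's reasoning preceding the corollary. Your explicit verification that (\ref{eq:4.3}) yields (\ref{eq:1.8}) at both endpoints, and your subsequence extraction to get a single increasing sequence for Theorem~1.1, make the bookkeeping slightly more explicit than in the paper, but the substance is the same.
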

It is important to increase the number of intervals included in $[b, b+ 1]$  satisfying (\ref{eq:4.3}). By using Lemma 3.1, we see that for $\ep\searrow 0$ we have $b_0(\ep) \nearrow \infty$ so a more precise asymptotics for the counting functions of the number of frequencies with remainder is necessary. Under some geometrical assumptions, it was proved (see \cite[Theorem 4]{petkov2012}) that we may replace $\ep$ by $ e^{- \mu b}$ with small $0 < \mu < h$  and  obtain a lower bound of
\[   \sharp\{ \gamma \in \Pi: \alpha \leq \tau^{\sharp}(\gamma) \leq \alpha + e^{-\mu b}\} .\] 
These assumptions are satisfied for $d = 2$, while for $d \geq 3$ one make some restrictions. We refer to \cite[Section 8]{petkov2012} for precise results and more details. Under these assumptions we  obtain $M(e^{-\mu b}, \delta, b) \sim e^{\mu b}$ as $b \to \infty $ so the number of intervals satisfying (\ref{eq:4.3}) increase exponentially as $b \to + \infty.$ The issue is that  the possibilities to satisfy the conditions of Theorem 1.1 increase exponentially, too.

To obtain a lower bound for $|a_n|, \: \forall n \geq n_0$, introduce the condition\\
    (L)  There exist constants $c_1 > 0, c_2 > 0,$ independent of $n$ such that 
   \begin{equation} \label{eq:4.4}
 |a_n| \geq c_1 e^{-c_2 \lambda_n},\:\forall n \geq  n_0.
\end{equation}   
    The  condition (\ref{eq:4.4}) holds in the case when the lengths of primitive periodic rays $\gamma \in \Pi$ are rationally independent, because (\ref{eq:1.7}) will contain only one term and from (\ref{eq:1.4}) one deduces (\ref{eq:4.4})  with $c_1 = \min_{i \neq j} {\rm dist}\: (D_i, D_j)$
  and $c_2 = d_2/2.$ This rational independence has been proved for generic domains  (see \cite[Theorem 6.2.3]{petkov2017geometry}). Then if (L) holds and 
  \begin{equation} \label{eq:4.5}
  |\sum_{k \geq m} a_k| < e^{-\gamma \lambda_m},\: |\sum_{k \geq m+1} a_k| < e^{- \gamma \lambda_{m+1}}
  \end{equation}
with $ \gamma > c_2 + 1$, one has $ c_1 e^{- c_2 \lambda_m} \leq  |a_m| <2 e^{-\gamma \lambda_m}$ which is impossible for large $\lambda_m.$ Hence at least one of the estimates (\ref{eq:4.5}) does not hold. We may study also the existence of 3 consecutive frequencies which are exponentially separated from each other.
\begin{corr} Assume $(L)$ satisfied. Suppose that there exist  constants $\delta > 0, \: C > 0$ and a sequence $\lambda_{m_j} \nearrow + \infty$ such that
\begin{equation} \label{eq:4.6}
\lambda_{m_j} - \lambda_{m_j - 1}  > Ce^{-\delta \lambda_{m_j}},\:  \lambda_{m_{j+1}} - \lambda_{m_j} > Ce^{-\delta \lambda_{m_{j+1}}}.
\end{equation} 
Then $\eta_D$ cannot be prolonged as entire function.    
\end{corr}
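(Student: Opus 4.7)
The plan is to assume $\eta_D(s)$ is entire and obtain a contradiction by exhibiting a subsequence of frequencies to which Theorem 1.1 applies. First, as explained after Theorem 1.1, one may reduce without loss of generality to the case $\sigma_c < 0$: the shift $s \mapsto s + \sigma_c + 1$ produces a new Dirichlet series with coefficients $b_n = a_n e^{-\lambda_n(\sigma_c+1)}$; the lower bound (L) for $a_n$ translates to an analogous lower bound for $b_n$ with $c_2$ replaced by $c_2 + \sigma_c + 1$, while the gap conditions (\ref{eq:4.6}) are unaffected. So assume $\sigma_c < 0$ and fix $\gamma > \max\{c_2 + 1,\, -\sigma_c\}$.

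The main input is the dichotomy argument already carried out immediately before the corollary: for any index $n$ with $\lambda_n$ sufficiently large, if both $|\sum_{k \geq n} a_k| < e^{-\gamma \lambda_n}$ and $|\sum_{k \geq n+1} a_k| < e^{-\gamma \lambda_{n+1}}$ held, then the triangle inequality combined with (L) would give $c_1 e^{-c_2 \lambda_n} \leq |a_n| < 2 e^{-\gamma \lambda_n}$, which is impossible since $\gamma > c_2$. Applying this dichotomy with $n = m_j$, for each sufficiently large $j$ at least one of
\[
\Bigl|\sum_{k \geq m_j} a_k\Bigr| \geq e^{-\gamma \lambda_{m_j}}, \qquad \Bigl|\sum_{k \geq m_j+1} a_k\Bigr| \geq e^{-\gamma \lambda_{m_j+1}}
\]
must hold. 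Choose $n_j \in \{m_j,\, m_j+1\}$ realizing the valid estimate; then (\ref{eq:1.9}) is satisfied along $(n_j)$.

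It remains to check (\ref{eq:1.8}) along $(n_j)$. If $n_j = m_j$, this is the first half of (\ref{eq:4.6}). If $n_j = m_j+1$, then reading the second half of (\ref{eq:4.6}) in accordance with the remark about ``three consecutive frequencies exponentially separated from each other'' (that is, interpreting $m_{j+1} = m_j + 1$), one gets $\lambda_{m_j+1} - \lambda_{m_j} \geq C e^{-\delta \lambda_{m_j+1}}$, which is exactly (\ref{eq:1.8}) at $n_j$. Passing to a strictly increasing subsequence if necessary, Theorem 1.1 applies (with the tacit understanding that $\delta > h+1$) and contradicts the assumption that $\eta_D$ is entire.

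The main obstacle is the bookkeeping of constants: $\gamma$ must be chosen large enough to simultaneously defeat (L) in the dichotomy and to satisfy the condition $-\gamma < \sigma_c$ required by Theorem 1.1, and $\delta$ must exceed $h+1$. Granted these, the remaining argument reduces to a short triangle-inequality estimate followed by a two-way case split on which of the two consecutive indices $m_j, m_j+1$ retains a non-negligible tail sum, so that one may always locate, near each $\lambda_{m_j}$, an index at which both hypotheses of Theorem 1.1 are in force.
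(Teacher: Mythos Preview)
Your proof is correct and follows precisely the sketch given in the paper: use the dichotomy forced by (L) to secure (\ref{eq:1.9}) at one of $m_j$ or $m_j+1$, then read off (\ref{eq:1.8}) from the appropriate half of (\ref{eq:4.6}), and invoke Theorem~1.1. One small remark: you need not leave $\delta > h+1$ as a tacit assumption, since if (\ref{eq:4.6}) holds for some $\delta > 0$ it automatically holds for every larger $\delta'$ (the right-hand sides only decrease), so one may simply replace $\delta$ by $\max(\delta, h+2)$ before applying Theorem~1.1.
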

For the proof we exploit (\ref{eq:4.6}) and  we arrange easily (\ref{eq:1.9}). Then we apply Theorem 1. We conjecture that for generic domains there exists a sequence $\{\lambda_{m_j}\}$  satisfying (\ref{eq:4.6}).

Going back to intervals $I(\lambda_k, \delta)$, notice that for $\lambda_{k + q}$ one has also 2 possibilities:    
      \[(iii) \: \:\:|\sum_{n \geq k + q} a_n| \geq e^{-\gamma \lambda_{k + q}},\:\:
         (iv) \: \:\:|\sum_{n \geq k + q} a_n| <  e^{-\gamma \lambda_{k + q}}.\]           
 Assuming (L) and $\gamma > c_2 + 1$,  in the case (iv)  the conditions (\ref{eq:1.8}), (\ref{eq:1.9}) are satisfied for $\lambda_{k + q}$ and $\lambda_{k + q+ 1}.$   Consequently,  we obtain the following
 \begin{prop}   Assume $(L)$ satisfied and $\gamma > c_2 + 1$. Then for every interval $I(\lambda_k, \delta)$ we have $4$ possibilities:  $(i)-(iii), (i)-(iv), (ii)-(iii), (ii)-(iv).$ If $(i)$ holds, or if we have $(ii)-(vi)$, we may find an interval $[\lambda_{k-p- 1}, \lambda_{k- p}]$ or $[\lambda_{k + q}, \lambda_{k + q + 1}]$ satisfying $(\ref{eq:1.8})$ and $(\ref{eq:1.9}).$ 
\end{prop}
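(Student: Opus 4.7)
The plan is to handle the two covered cases by transferring the dichotomy information to a frequency lying just outside the cluster $I(\lambda_k,\delta)$, where the separation estimates in (\ref{eq:4.3}) supply (\ref{eq:1.8}) automatically; the only real work is to arrange (\ref{eq:1.9}). Case (i) is essentially tautological, while case (ii)-(iv) requires hypothesis (L) to convert smallness of the tail at $\lambda_{k+q}$ into largeness of the tail at $\lambda_{k+q+1}$.

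\noindent\textbf{Case (i).} I would take $m_j=k-p$, so (\ref{eq:1.9}) is exactly hypothesis (i). For (\ref{eq:1.8}), the left separation in (\ref{eq:4.3}) gives
\[
\lambda_{k-p}-\lambda_{k-p-1} > e^{-\delta\lambda_{k-p-1}} \geq e^{-\delta\lambda_{k-p}},
\]
so (\ref{eq:1.8}) holds with $C=1$. The required interval is $[\lambda_{k-p-1},\lambda_{k-p}]$.

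\noindent\textbf{Case (ii)-(iv).} The key observation is the identity $\sum_{n\geq k+q+1}a_n = \sum_{n\geq k+q}a_n - a_{k+q}$. The reverse triangle inequality combined with (L) and (iv) yields
\[
\Bigl|\sum_{n\geq k+q+1}a_n\Bigr|\ \geq\ |a_{k+q}|-\Bigl|\sum_{n\geq k+q}a_n\Bigr|\ \geq\ c_1 e^{-c_2\lambda_{k+q}}-e^{-\gamma\lambda_{k+q}}.
\]
Because $\gamma>c_2+1$, the subtracted term is negligible compared to the main term once $\lambda_{k+q}$ exceeds an absolute threshold, leaving a bound of the form $\tfrac{c_1}{2}e^{-c_2\lambda_{k+q}}$. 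I would then use Lemma 3.1 to control the gap $\lambda_{k+q+1}-\lambda_{k+q}$: any interval of length $\epsilon$ whose left endpoint is $\geq b_0$ must contain a frequency, so the gap is bounded by an absolute constant. This permits absorbing the shift in the exponent to conclude
\[
\Bigl|\sum_{n\geq k+q+1}a_n\Bigr|\ \geq\ e^{-\gamma\lambda_{k+q+1}},
\]
which is (\ref{eq:1.9}) at $m_j=k+q+1$. The condition (\ref{eq:1.8}) at this index then follows from the right separation in (\ref{eq:4.3}), by exactly the same monotonicity argument used in case (i). The required interval is $[\lambda_{k+q},\lambda_{k+q+1}]$.

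\noindent\textbf{Main obstacle.} The only delicate step is the exponential shift from $\lambda_{k+q}$ to $\lambda_{k+q+1}$ in case (ii)-(iv), which amounts to verifying the inequality $\gamma(\lambda_{k+q+1}-\lambda_{k+q})+(\gamma-c_2)\lambda_{k+q}\geq \log(2/c_1)$; this holds for $\lambda_{k+q}$ beyond an absolute threshold thanks to the ``$+1$'' slack in $\gamma>c_2+1$ and the uniform upper bound on the consecutive gap provided by Lemma 3.1. I would also point out explicitly that the symmetric strategy fails for case (ii)-(iii): hypothesis (iii) places (\ref{eq:1.9}) at the frequency $\lambda_{k+q}$, but $\lambda_{k+q}$ lies inside the cluster $I(\lambda_k,\delta)$ and therefore carries no left separation of the form required by (\ref{eq:1.8}). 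This is consistent with the proposition covering only three of the four possibilities.
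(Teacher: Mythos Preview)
Your proof is correct and follows essentially the same route as the paper. For case (i) you and the paper both observe that (\ref{eq:1.8}) and (\ref{eq:1.9}) hold directly at $m_j=k-p$ via (\ref{eq:4.3}); for case (iv) the paper argues by contradiction that the two tail bounds in (\ref{eq:4.5}) cannot both hold under (L), while you give the equivalent direct estimate via the reverse triangle inequality on $a_{k+q}=\sum_{n\geq k+q}a_n-\sum_{n\geq k+q+1}a_n$. One remark: your appeal to Lemma~3.1 to bound the gap $\lambda_{k+q+1}-\lambda_{k+q}$ is unnecessary, since in your displayed inequality $\gamma(\lambda_{k+q+1}-\lambda_{k+q})+(\gamma-c_2)\lambda_{k+q}\geq \log(2/c_1)$ both terms on the left are nonnegative and the second one alone already diverges.
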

A more fine analysis of the estimates of the sums  $|\sum_{n = k_j -p_j}^{ k_j + q_j} a_n|$ should imply more precise results.

    \bibliographystyle{alpha}
\bibliography{bib.bib}

\end{document}